\documentclass[11pt,reqno]{amsart}
\usepackage{amsfonts,amssymb,amsmath}
\usepackage{datetime}
\usepackage{xcolor}

\definecolor{fgreen}{RGB}{44,144, 14}

\setlength{\textheight}{23cm} 
\setlength{\textwidth}{16cm} 
\setlength{\topmargin}{-0.8cm} 
\setlength{\parskip}{0.25\baselineskip} 
\renewenvironment{proof}{{\bfseries Proof.}}{\qed}

\hoffset=-1.4cm 
\numberwithin{equation}{section} 
\baselineskip=15.5pt

\newtheorem{theorem}{Theorem}[section] 
 
\newtheorem{corollary}[theorem]{Corollary} 
\newtheorem{lemma}[theorem]{Lemma} 

\theoremstyle{definition}

\newtheorem{example}[theorem]{Example}

\usepackage[colorlinks=true, citecolor=cyan,urlcolor=blue,linkcolor=violet]{hyperref}

\def\R{\mathbb {R}}
\def\C{\mathbb {C}}
\def\N{\mathbb {N}}
\def\H{\mathbb {H}}

\def\d{\mathbf {d}}

\def\ab{\mathbf {a}}
\def\xb{\mathbf {x}}

\def\PC{\mathcal {P}}

\def\BC{\mathcal {B}}

\def\g{\mathfrak {g}}

\def\l{\mathfrak {l}}

\def\uf{\mathfrak {u}}

\def\lto{\longrightarrow}
\def\GL{\rm GL}

\newcommand{\secref}[1]{Section~\ref{#1}}
\newcommand{\thmref}[1]{Theorem~\ref{#1}}
\newcommand{\lemref}[1]{Lemma~\ref{#1}}

\newcommand{\corref}[1]{Corollary~\ref{#1}}

\def\U{{\rm U}_n^{\pm1}}

\begin{document} 
	
\title[Real adjoint orbits of the unipotent subgroup]{Real adjoint orbits of the unipotent subgroup}
\author[K. Gongopadhyay  \and C. Maity]{Krishnendu Gongopadhyay \and Chandan Maity
}
\address{Indian Institute of Science Education and Research (IISER) Mohali,
	Knowledge City,  Sector 81, S.A.S. Nagar 140306, Punjab, India}
\email{krishnendug@gmail.com, krishnendu@iisermohali.ac.in}
\address{Indian Institute of Science Education and Research (IISER) Mohali,
	Knowledge City,  Sector 81, S.A.S. Nagar 140306, Punjab, India}
\email{maity.chandan1@gmail.com, cmaity@iisermohali.ac.in}
\thanks{Maity was supported by NBHM post-doctoral fellowship during the course of this work.}
\subjclass[2010]{Primary 20E45; Secondary: 22E25, 20G20}
\keywords{Real element,  strongly real element, Upper triangular matrices}

\begin{abstract}	
Let $G$ be a linear Lie group that acts on it's Lie algebra $\g$ by the adjoint action: ${\rm Ad}(g)X=gXg^{-1}$. An element $X\in \g$ is called {\it {\rm Ad}$_G$-real } if $-X ={\rm Ad}(g)X $ for some $g\in G$. An  {\rm Ad}$_G$-real  element $X$ is called {\it strongly {\rm Ad}$_G $-real}  if $-X = {\rm Ad}(\tau) X $ for some involution  $\tau\in G$.

 Let $K=\R$, $\C$ or $\H$. Let 
${\rm U}_n(K)$ be the group of unipotent upper-triangular matrices over $K$. Let $\mathfrak{u}_n (K)$ be the Lie algebra of ${\rm U}_n(K)$ that consists of  $n \times n$ upper triangular matrices with $0$ in all the diagonal entries. In this paper, we consider the Ad-reality of the Lie algebra $ \mathfrak u_n(K) $ that comes from the adjoint action of the Lie  group ${{\rm U}_n(K)}$ on $ \mathfrak u_n(K)$. We prove that there is no non-trivial {\rm{Ad}}$_{{\rm U}_n(K)}$-real element in $\mathfrak{u}_n (K)$. We also consider the adjoint action of the extended group ${\U}(K)$ that consists of all upper triangular matrices over $K$ having diagonal elements as $1$ or $-1$, and construct a large class of Ad$ _{\U( K)} $-real elements. As applications of these results, we recover related results concerning classical reality in these groups. 
\end{abstract}
\maketitle

\section{Introduction} 
Let $K=\R$, $\C$ or $\H$. Let ${\rm T}_n(K)$ be the subgroup of ${\rm GL}_n(K)$ consists of $n \times n$ upper triangular matrices over $K$, and 
${\rm U}_n(K)$ be the subgroup of ${\rm T}_n(K)$ consists of  matrices with $1$ in the diagonal entries. Let $\mathfrak{u}_n (K)$ be the Lie algebra of ${\rm U}_n(K)$ that consists of  $n \times n$ upper triangular matrices with $0$ in all the diagonal entries. 

 It is well-known in the literature that the number of conjugacy classes in ${\rm T}_n(K)$ is infinite for  $n \geq 6$, see \cite{DM}. Given an element $g$ in ${\rm T}_n(K)$, one may consider its Jordan canonical form ${\rm J}_g$ in ${\rm GL}_n(K)$. There are infinitely many conjugacy classes in ${\rm T}_n(K)$ corresponding to the same form ${\rm J}_g$. In general the problem of listing conjugacy class representatives in ${\rm T}_n(K)$ is considerably difficult. In the literature one often considers the action of ${\rm T}_n(K)$ on the Lie algebra $\mathfrak{u}_n(K)$ of upper triangular nilpotent matrices and tries to classify the conjugacy orbits. The Belitski\u{i}'s algorithm gives one such algorithm to classify such canonical forms. However, complete list of canonical forms using this algorithm is only known for lower values of $n$,  see  \cite{Ko}, \cite{CXLF}, \cite{TBH} for  $n\le 8$. 

It is a problem of related interest to classify the real conjugacy classes. Recall that an element $g$ in a group $G$ is called \emph{real} or \emph{reversible} if it is conjugate to $g^{-1}$ in $G$. The element $g$ is called \emph{strongly real} or \emph{strongly reversible} if $g$ is a product of two involutions in $G$. Equivalently if there is an involution $h\in G$ so that $g^{-1}=hgh^{-1}$, then $g$ is strongly real. Thus a strongly real element in $G$ is real, but a real element is not necessarily strongly real. 
 It has been a problem of wide interest to investigate the real and the strongly real elements in groups, see \cite{FS} for an exposition of this theme.

 For a Lie group $G$, one can define a notion of reality in the Lie algebra $\g$ of $G$.  Any Lie group $G$  acts on it's Lie algebra $\g$ via the $ {\rm Ad} $-representation. 
 An element $X\in \g$ is called {\it {\rm Ad}$_G$-real } if $-X ={\rm Ad}(g)X $ for some $g\in G$. An  {\rm Ad}$_G$-real  element $X$ is called {\it strongly {\rm Ad}$_G $-real}  if $-X = {\rm Ad}(\tau) X $ for some involution  $\tau\in G$, see \cite[Definition 1.1]{GM}.  Observe that if $X\in \g$ is  {\rm Ad}$_G$-real, then $\exp X$ is real in $G$. 

For a  linear  Lie group $ G $, the $ {\rm Ad} $-action is given by $ {\rm Ad}(g)X=gXg^{-1}$.  The the notion of Ad-reality comes from the conjugation action of $G$ on $\g$. 
 We have  applied the notion of Ad-reality in \cite{GM} to classify real and strongly real unipotent elements in a classical simple Lie group.

In this exposition  we consider the Ad-reality of the Lie algebra $ \mathfrak u_n(K) $ that comes from the action of the linear Lie  group ${{\rm U}_n(K)}$ on $ \mathfrak u_n(K)$. We prove the following result. 
\begin{theorem}\label{1.1} There is no non-trivial {\rm{Ad}}$_{{\rm U}_n(K)}$-real element in $\mathfrak{u}_n (K)$.
\end{theorem}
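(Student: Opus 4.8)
The plan is to exploit the filtration of $\mathfrak{u}_n(K)$ by distance from the diagonal and to observe that the adjoint action of ${\rm U}_n(K)$ is trivial on the associated graded pieces. For $1 \le m \le n$, let $\mathfrak{u}_n^{(m)}(K)$ denote the subspace of $X \in \mathfrak{u}_n(K)$ with $X_{ij} = 0$ whenever $j - i < m$; thus $\mathfrak{u}_n^{(1)}(K) = \mathfrak{u}_n(K)$ and $\mathfrak{u}_n^{(n)}(K) = \{0\}$, and these nest as $\mathfrak{u}_n(K) = \mathfrak{u}_n^{(1)}(K) \supseteq \mathfrak{u}_n^{(2)}(K) \supseteq \cdots \supseteq \mathfrak{u}_n^{(n)}(K) = \{0\}$. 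A direct index count shows that if $A \in \mathfrak{u}_n^{(p)}(K)$ and $B \in \mathfrak{u}_n^{(q)}(K)$, then $AB \in \mathfrak{u}_n^{(p+q)}(K)$; this holds over $\H$ as well, since it depends only on the vanishing pattern of the entries and not on commutativity of $K$.

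The key step is to show that for every $g \in {\rm U}_n(K)$ and every $X \in \mathfrak{u}_n^{(m)}(K)$ one has $gXg^{-1} - X \in \mathfrak{u}_n^{(m+1)}(K)$. Writing $g = I + N$ with $N \in \mathfrak{u}_n^{(1)}(K)$, I would use the identity $gXg^{-1} - X = (NX - XN)g^{-1}$ together with the product estimate above: $NX - XN \in \mathfrak{u}_n^{(m+1)}(K)$, and multiplying on the right by $g^{-1} \in {\rm U}_n(K)$ keeps the result in $\mathfrak{u}_n^{(m+1)}(K)$. Consequently $gXg^{-1} \equiv X \pmod{\mathfrak{u}_n^{(m+1)}(K)}$; that is, the induced action of ${\rm U}_n(K)$ on each quotient $\mathfrak{u}_n^{(m)}(K)/\mathfrak{u}_n^{(m+1)}(K)$ is trivial. (The case $m=1$ is already instructive: it says that conjugation by a unipotent matrix leaves the superdiagonal of $X$ unchanged.)

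With this in hand the theorem follows by induction. Suppose $X \in \mathfrak{u}_n(K)$ is ${\rm Ad}_{{\rm U}_n(K)}$-real, say $gXg^{-1} = -X$ for some $g \in {\rm U}_n(K)$. I claim $X \in \mathfrak{u}_n^{(m)}(K)$ for all $m$. This is clear for $m = 1$. Assuming $X \in \mathfrak{u}_n^{(m)}(K)$, reduce the equation $gXg^{-1} = -X$ modulo $\mathfrak{u}_n^{(m+1)}(K)$: the left side is congruent to $X$ by the key step, so $X \equiv -X$, i.e.\ $2X \in \mathfrak{u}_n^{(m+1)}(K)$; since $K$ has characteristic zero this forces $X \in \mathfrak{u}_n^{(m+1)}(K)$. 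Taking $m = n$ gives $X \in \mathfrak{u}_n^{(n)}(K) = \{0\}$, so the only ${\rm Ad}_{{\rm U}_n(K)}$-real element is $0$.

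The argument is short, and the only genuine content is the key step, which is also where one must be most careful: the product/filtration estimate and the identity $gXg^{-1} - X = (NX - XN)g^{-1}$ should be verified without assuming commutativity, so that a single proof covers $K = \H$ along with $\R$ and $\C$. The passage from $2X \equiv 0$ to $X \equiv 0$ likewise uses only that $2$ is invertible in $K$, which holds in all three cases. I do not anticipate a substantial obstacle beyond bookkeeping with the filtration indices.
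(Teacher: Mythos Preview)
Your proof is correct and takes a genuinely different route from the paper. The paper's argument passes to the Jordan canonical form $J_X$ of $X$ and analyzes any $g\in\GL_n(K)$ satisfying $-J_X=gJ_Xg^{-1}$ in a carefully chosen ordered basis; it shows that $[g]_\BC$ is block upper triangular with some diagonal block $g_{11}$ reappearing as $-g_{11}$, so the eigenvalues of $g$ come in $\pm$ pairs and $g$ cannot be unipotent. Your argument instead exploits the descending filtration $\mathfrak{u}_n^{(m)}$ (equivalently, the lower central series of $\mathfrak{u}_n$), observes that ${\rm U}_n(K)$ acts trivially on each graded piece, and concludes by a short induction that $gXg^{-1}=-X$ forces $X\in\bigcap_m\mathfrak{u}_n^{(m)}=0$. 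Your approach is shorter, avoids Jordan forms and basis bookkeeping entirely, and makes transparent that only ${\rm char}\,K\neq 2$ is used; the paper's approach, on the other hand, yields the extra structural information that any reversing element for a nonzero nilpotent has spectrum closed under $\lambda\mapsto-\lambda$, which is of independent interest beyond the unipotent case.
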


The exponential map is a diffeomorphism for the unipotent subgroup ${\rm U}_n(K)$. So, the ${\rm Ad}_{{\rm U}_n(K)}$-reality in $ \mathfrak{u}(n,K) $ and the classical reality in ${\rm U}_n(K)$ are equivalent under the exponential map. From this the following result follows. 
\begin{corollary}\label{cor1.2}
 The only real element in the unipotent group ${\rm U}_n(K)$ is the identity \hbox{element}. 
\end{corollary}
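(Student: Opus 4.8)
The substance is \thmref{1.1}; the corollary then drops out via the exponential. The plan is to exploit the grading of $\mathfrak{u}_n(K)$ by distance from the main diagonal. For $1\le d\le n-1$ let $V_d\subseteq\mathfrak{u}_n(K)$ be the space of matrices supported on the $d$-th superdiagonal (nonzero entries only in positions $(i,i+d)$), so that $\mathfrak{u}_n(K)=\bigoplus_{d=1}^{n-1}V_d$ with $V_a\cdot V_b\subseteq V_{a+b}$ (and $V_c=0$ for $c\ge n$). Writing $X=\sum_{d\ge1}X_d$ with $X_d\in V_d$, I would first record that conjugation by a unipotent matrix respects the associated decreasing filtration $F^{\ge d}=\bigoplus_{e\ge d}V_e$ and is \emph{unipotent} for it.

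Concretely, for $g\in {\rm U}_n(K)$ write $g=I+N$ with $N$ strictly upper triangular, so $g^{-1}=I-N+N^2-\cdots$ is a finite sum. Expanding $gXg^{-1}$ as a combination of words $N^{i}XN^{j}$, the only word without a factor of $N$ is $X$ itself; since every factor of $N$ raises the superdiagonal degree by at least one, the degree-$d$ part of any other word can only involve the components $X_b$ with $b<d$. Hence
\[
(gXg^{-1})_d=X_d+P_d(X_1,\dots,X_{d-1}),
\]
where $P_d$ depends only on $g$ and on the strictly lower parts of $X$. (Equivalently, writing $g=\exp Y$ and using $gXg^{-1}=e^{{\rm ad}\,Y}X=\sum_{k\ge0}\frac{1}{k!}({\rm ad}\,Y)^kX$, every $k\ge1$ term pushes the component $X_b$ it contains into degree $\ge b+1$.)

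Now suppose $X$ is ${\rm Ad}_{{\rm U}_n(K)}$-real, say $gXg^{-1}=-X$, and I would show $X_d=0$ for all $d$ by induction on $d$. For $d=1$ there is no lower part, so the displayed identity gives $X_1=-X_1$, i.e. $2X_1=0$, hence $X_1=0$. For the step, assuming $X_1=\cdots=X_{d-1}=0$ kills $P_d$, leaving $X_d=-X_d$ and hence $X_d=0$. As $K=\R,\C$ or $\H$ has characteristic zero, $2X_d=0$ forces $X_d=0$, so summing over $d$ yields $X=0$.

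The crux I expect to be the degree bookkeeping in the middle step: one must verify cleanly that conjugation by a unipotent matrix acts as the identity on each graded piece $V_d$ modulo contributions from strictly lower superdiagonals, so that it preserves the lowest nonvanishing superdiagonal component of any $X$. This uses only the positions of matrix entries and not commutativity of $K$, so the argument is uniform in $K=\R,\C,\H$; the field enters only through the harmless implication $2X_d=0\Rightarrow X_d=0$. Finally \corref{cor1.2} is immediate: since $\exp\colon\mathfrak{u}_n(K)\to {\rm U}_n(K)$ is a bijection intertwining ${\rm Ad}$ with conjugation, a real $g=\exp X$ with $hgh^{-1}=g^{-1}$ gives $\exp({\rm Ad}(h)X)=\exp(-X)$, whence ${\rm Ad}(h)X=-X$ by injectivity and $X=0$ by \thmref{1.1}, so $g=I$.
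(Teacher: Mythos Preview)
Your argument is correct and takes a genuinely different route from the paper. The paper proves \thmref{1.1} by passing to the Jordan canonical form $J_X$ of $X$ and establishing (\lemref{lem-conjugating-elt-nonunipotent}) that any $g\in\GL_n(K)$ with $gJ_Xg^{-1}=-J_X$ must have a pair of eigenvalues $\lambda,-\lambda$, via a carefully chosen ordered basis in which $[g]_{\BC}$ is block upper triangular with both $g_{11}$ and $-g_{11}$ occurring among the diagonal blocks; a unipotent conjugator, having only the eigenvalue $1$, is then ruled out. Your approach bypasses Jordan forms entirely: using the superdiagonal grading $\mathfrak{u}_n(K)=\bigoplus_d V_d$ and the fact that $gXg^{-1}-X\in F^{\ge d+1}$ whenever $X\in F^{\ge d}$ and $g\in{\rm U}_n(K)$, you see that unipotent conjugation fixes the lowest nonvanishing graded piece, so $gXg^{-1}=-X$ forces $X_d=-X_d$ and hence $X_d=0$ inductively. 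This is shorter and more elementary, and the quaternionic case needs no separate discussion of canonical forms; on the other hand, the paper's lemma yields extra structural information about \emph{any} conjugator sending $X$ to $-X$ (the eigenvalue symmetry $\lambda\leftrightarrow-\lambda$), not only unipotent ones. Both proofs finish \corref{cor1.2} the same way, through the bijectivity of $\exp\colon\mathfrak{u}_n(K)\to{\rm U}_n(K)$.
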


When $K=\R$ or $\C$,  the above corollary was proven in \cite[Theorem 1.3]{S} using a different method that involves  matrix computations. Our method is completely different because of the use of the Lie theoretic ideas. Our proof is also much shorter. Another advantage of our method is that the proof is also valid when $K$ is the division ring of the real quaternions, and hence it provides a uniform approach  irrespective of the underlying skew-field. The method in \cite{S} is unlikely to hold for matrices over the quaternions due to the non-commutativity of the quaternions. 

Since any unipotent Lie group $ {\rm N} $ can be embedded in $ {\rm  U}_n(K) $ for some $n$, as an immediate consequence  we have the following result. 
\begin{corollary}
There is no non-trivial real element in a unipotent Lie group ${\rm N}$. 
\end{corollary}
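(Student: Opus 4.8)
The plan is to deduce the statement from Corollary~\ref{cor1.2}: I would realize ${\rm N}$ as a subgroup of some ${\rm U}_n(K)$ and then observe that the property of being real is inherited by subgroups under an injective homomorphism. All of the real content sits in the embedding; once that is in hand, the conclusion is immediate.

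First I would invoke the structure theory of unipotent Lie groups. Such an ${\rm N}$ is connected and simply connected, its Lie algebra $\mathfrak{n}$ is nilpotent, and $\exp\colon \mathfrak{n}\to {\rm N}$ is a diffeomorphism. By Birkhoff's embedding theorem (equivalently, Ado's theorem together with Engel's theorem) the nilpotent Lie algebra $\mathfrak{n}$ admits a faithful finite-dimensional representation by \emph{strictly} upper triangular matrices, i.e. an injective Lie algebra homomorphism $\rho\colon \mathfrak{n}\hookrightarrow \mathfrak{u}_n(\R)$ for a suitable $n$. Because ${\rm N}$ is simply connected, $\rho$ integrates to a Lie group homomorphism $\Phi\colon {\rm N}\to {\rm U}_n(\R)$; concretely $\Phi=\exp\circ\,\rho\circ\exp^{-1}$, which is a homomorphism by the Baker--Campbell--Hausdorff formula (a finite sum, by nilpotency) and is injective since $\rho$ is injective and $\exp$ is a diffeomorphism on both ${\rm N}$ and ${\rm U}_n(\R)$. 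Thus $\Phi$ embeds ${\rm N}$ as a subgroup of ${\rm U}_n(\R)\subseteq {\rm U}_n(K)$.

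Next I would transfer reality along $\Phi$. Suppose $g\in {\rm N}$ is real, say $g^{-1}=hgh^{-1}$ for some $h\in {\rm N}$. Applying $\Phi$ gives $\Phi(g)^{-1}=\Phi(h)\,\Phi(g)\,\Phi(h)^{-1}$, so $\Phi(g)$ is a real element of ${\rm U}_n(K)$. By Corollary~\ref{cor1.2} the only real element there is the identity, whence $\Phi(g)$ is the identity matrix, and injectivity of $\Phi$ forces $g$ to be the identity of ${\rm N}$. Hence the identity is the only real element of ${\rm N}$.

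The routine steps are the transfer of reality and the appeal to Corollary~\ref{cor1.2}; the step I expect to carry the actual content, and to be the main obstacle, is the embedding ${\rm N}\hookrightarrow {\rm U}_n(\R)$. One must secure a faithful representation whose image lands in the \emph{strictly} upper triangular algebra $\mathfrak{u}_n(\R)$, rather than merely an arbitrary faithful representation as provided by Ado's theorem alone; this refinement (nilpotent elements acting by nilpotent operators, then strictly upper triangularized via Engel) is exactly the classical embedding theorem for simply connected nilpotent Lie groups. Once ${\rm N}$ is realized inside ${\rm U}_n(\R)$, the inclusion ${\rm U}_n(\R)\subseteq {\rm U}_n(K)$ lets us invoke Corollary~\ref{cor1.2} verbatim, so no separate analysis over $\C$ or $\H$ is required.
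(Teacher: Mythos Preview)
Your proposal is correct and follows essentially the same approach as the paper: embed ${\rm N}$ into some ${\rm U}_n(K)$ and then invoke Corollary~\ref{cor1.2}. The paper simply asserts the embedding as known, whereas you spell out the Ado/Engel argument and the transfer of reality along $\Phi$, but the underlying strategy is identical.
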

We note that the above result has also been proved in \cite[Lemma 4.6]{C} using a more general machinery from the theory of linear algebraic groups. 

\medskip Even though we see that there is no real unipotent element in ${\rm U}_n(K)$, it may happen that the unipotent element becomes real in an enlarged group containing ${\rm U}_n(K)$.  Let ${\U}(K)$ be the group of all upper triangular matrices over $K$ having diagonal elements as $1$ or $-1$. 
Note that if an upper triangular  matrix $g$ in $  {\rm T}_n(K) $ is strongly real, then $ g $ has to be in $ {\U}(K) $. This motivates us  to consider the conjugation action of ${\U}(K)$ on the Lie algebra $\mathfrak{u}_n( K)$. 
 
Let $ \uf_n^\star(K) $ be the subset of $\uf_n(K)$ consisting of the elements with non-zero entries at $ (i, i+1)^{\rm th}$ place for $ i=1, \dots , n-1 $  (  entries  above the main diagonals are non-zero). We prove the following. 

\begin{theorem}\label{thm1.4}
Every element in $ \uf_n^\star(K) $ is {\rm Ad}$ _{\U(K)} $-real.
\end{theorem}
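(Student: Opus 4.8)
The plan is to move $X$ into a simple normal form by conjugating inside ${\rm U}_n(K)$, and then to negate that normal form by a single diagonal involution lying in $\U(K)$. Call a matrix \emph{super-diagonal} if its only possibly-nonzero entries sit on the first superdiagonal. The elementary fact I would build on is that the involution $D=\mathrm{diag}(1,-1,1,\dots,(-1)^{n-1})\in\U(K)$ negates every super-diagonal matrix: if $N$ is super-diagonal with $N_{i,i+1}=\nu_i$, then $(DND^{-1})_{i,i+1}=(-1)^{i-1}\nu_i(-1)^{i}=-\nu_i$, so $DND^{-1}=-N$. Because the diagonal of $D$ is \emph{real}, this identity is valid verbatim over $K=\H$. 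Consequently the whole theorem reduces to the normal-form statement: every $X\in\uf_n^\star(K)$ is ${\rm U}_n(K)$-conjugate to a super-diagonal matrix.

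I would prove this reduction by induction on $n$, the cases $n\le 2$ being immediate since such $X$ is already super-diagonal. For $n\ge 3$ I would write $X=\left(\begin{smallmatrix}X'&v\\0&0\end{smallmatrix}\right)$, where $X'\in\uf_{n-1}^\star(K)$ is the top-left block and $v=(X_{1n},\dots,X_{n-1,n})^{\top}$ is the last column with $v_{n-1}=X_{n-1,n}\neq 0$. By the inductive hypothesis there is $h'\in{\rm U}_{n-1}(K)$ with $h'X'h'^{-1}=N'$ super-diagonal; conjugating $X$ by $\mathrm{diag}(h',1)$ then replaces $X'$ by $N'$ and $v$ by $w=h'v$, whose last coordinate is still $v_{n-1}\neq0$. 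The crux is to kill the remaining entries $w_1,\dots,w_{n-2}$. For this I would conjugate by $u=I+\sum_{i=1}^{n-1}s_iE_{in}$ (with $E_{in}$ the matrix units). Since the bottom row of the matrix vanishes one has $SX=0$ for $S=\sum_i s_iE_{in}$, so the conjugation collapses to $uXu^{-1}=X-XS$, which fixes the block $N'$ and the zero bottom row and acts on the last column by $w\mapsto w-N's$. As $N'$ is super-diagonal with invertible superdiagonal entries $\nu_1,\dots,\nu_{n-2}$, one computes $(N's)_i=\nu_i s_{i+1}$, so the image of $s\mapsto N's$ is exactly the set of vectors with vanishing $(n-1)$-th coordinate; choosing $s$ with $\nu_i s_{i+1}=w_i$ for $i\le n-2$ gives $w-N's=(0,\dots,0,w_{n-1})^{\top}$, which is super-diagonal, completing the induction.

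Finally, given $X\in\uf_n^\star(K)$, I would pick $h\in{\rm U}_n(K)$ with $hXh^{-1}=N$ super-diagonal and set $g=h^{-1}Dh$. Conjugation by the unipotent $h$ preserves the diagonal of $D$, so $g\in\U(K)$, and $g^2=h^{-1}D^2h=I$, so $g$ is an involution. Then
\[
gXg^{-1}=h^{-1}D(hXh^{-1})D^{-1}h=h^{-1}(DND^{-1})h=h^{-1}(-N)h=-X,
\]
which shows that $X$ is in fact \emph{strongly} {\rm Ad}$_{\U(K)}$-real, a fortiori {\rm Ad}$_{\U(K)}$-real. The main obstacle is the reduction lemma, i.e.\ the clearing of the last column; once the representative is super-diagonal the negation is automatic. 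I expect the quaternionic case to need no separate treatment: the only scalars required to commute past matrix entries are the real entries of $D$ and the signs $\pm1$, so the displayed identities hold over $K=\H$ as written, while solving $\nu_i s_{i+1}=w_i$ uses only the invertibility of $\nu_i$ in $K$.
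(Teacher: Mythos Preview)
Your argument is correct, but it follows a genuinely different route from the paper's. The paper runs the induction directly on the conjugating element: assuming $g_{n-1}X_{n-1}g_{n-1}^{-1}=-X_{n-1}$ with $g_{n-1}\in{\rm U}^{\pm1}_{n-1}(K)$, it extends $g_{n-1}$ to $g=\left(\begin{smallmatrix}g_{n-1}&\mathbf a\\ &\epsilon\end{smallmatrix}\right)$ by choosing $\epsilon$ to cancel the bottom diagonal entry of $g_{n-1}$ and then solving $X_{n-1}\mathbf a=-(g_{n-1}+\epsilon I)\mathbf x$ for $\mathbf a$, which is possible precisely because the superdiagonal entries $x_{i,i+1}$ are nonzero. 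You instead first normalize $X$ inside ${\rm U}_n(K)$ to a purely super-diagonal matrix (your inductive ``clearing the last column'' step uses the same solvability fact, namely that $s\mapsto N's$ surjects onto vectors with vanishing last coordinate because $\nu_i\neq0$), and then apply the fixed diagonal involution $D=\mathrm{diag}(1,-1,\dots)$. The payoff of your detour is real: because $D^2=I$ and unipotent conjugation preserves the diagonal, your $g=h^{-1}Dh$ is an \emph{involution} in $\U(K)$, so you actually prove that every $X\in\uf_n^\star(K)$ is \emph{strongly} ${\rm Ad}_{\U(K)}$-real, a statement the paper does not obtain in general and only verifies case-by-case for small $n$ in its final section. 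The paper's proof is a bit shorter and avoids the auxiliary normal-form lemma, but its inductively built $g$ is not visibly an involution.
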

  In \cite{S} some class of real unipotent elements in ${\U}(F)$ has been constructed for $F=\R$ or $\C$. Applying the above theorem we get the following corollary that gives certain types of real elements in $ \U(K)$. This extends the class of real elements in \cite{S}  over the quaternions. 

  Let ${\rm U}_n^\star(K) $ be the set of elements in ${\rm U}_n(K)$ consisting of  
  elements with non-zero entries at $ (i, i+1)^{\rm th}$ place for $ i=1, \dots , n-1 $  (entries  above the main diagonals are non-zero).  
  \begin{corollary} \label{thm-U}
  	Every element in  ${\rm U}_n^\star(K) \sqcup\big(  -{\rm U}_n^\star(K) \big)$ is real in $ \U(K) $. 
  \end{corollary}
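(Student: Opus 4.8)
The plan is to transfer Theorem \ref{thm1.4} from the Lie algebra to the group via the exponential map, in direct analogy with the way Corollary \ref{cor1.2} is obtained from Theorem \ref{1.1}. The first thing I would establish is that $\exp$ restricts to a bijection between $\uf_n^\star(K)$ and ${\rm U}_n^\star(K)$. For a nilpotent upper triangular $X$, the series $\exp X = I + X + \tfrac12 X^2 + \cdots$ is a finite sum, and $X^2, X^3, \dots$ all vanish on the first superdiagonal: indeed $(X^2)_{i,i+1} = \sum_k X_{i,k} X_{k,i+1}$ would require an index $k$ with $i < k < i+1$, which is impossible. Hence the $(i,i+1)$ entries of $\exp X$ coincide with those of $X$. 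The same entrywise computation applied to $\log u = (u-I) - \tfrac12(u-I)^2 + \cdots$ shows that $\log$ likewise preserves the first superdiagonal. Therefore $X \in \uf_n^\star(K)$ if and only if $\exp X \in {\rm U}_n^\star(K)$, and since $\exp$ is a diffeomorphism on ${\rm U}_n(K)$ with inverse $\log$, it restricts to the desired bijection. Crucially, this argument never uses commutativity of the scalars, so it is valid uniformly for $K = \R, \C, \H$.

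Next I would dispose of the first piece ${\rm U}_n^\star(K)$. Given $u \in {\rm U}_n^\star(K)$, write $u = \exp X$ with $X = \log u \in \uf_n^\star(K)$. By Theorem \ref{thm1.4}, $X$ is $\mathrm{Ad}_{\U(K)}$-real, so there exists $g \in \U(K)$ with $-X = gXg^{-1}$. Applying $\exp$ and using $\exp(gXg^{-1}) = g(\exp X)g^{-1}$ gives $u^{-1} = \exp(-X) = gug^{-1}$, so $u$ is real in $\U(K)$.

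For the second piece $-{\rm U}_n^\star(K)$, let $v = -u$ with $u \in {\rm U}_n^\star(K)$. Its diagonal entries are all equal to $-1$, so $v \in \U(K)$, and $v^{-1} = -u^{-1}$. Taking the same conjugator $g$ as above, $gvg^{-1} = -gug^{-1} = -u^{-1} = v^{-1}$, so $v$ is real in $\U(K)$ with the very same element $g$. This settles both halves of the disjoint union at once.

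The only genuine content beyond routine bookkeeping is the superdiagonal-preservation of $\exp$ and $\log$, which I expect to be the one point requiring care; in particular one must confirm that it holds verbatim over $\H$, where $\exp$ and $\log$ remain honest polynomial maps on nilpotent matrices and where the entrywise vanishing argument does not rely on commutativity. Once that is in place, the corollary is simply the push-forward of Theorem \ref{thm1.4} through the exponential diffeomorphism.
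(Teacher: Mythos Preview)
Your argument is correct and follows essentially the same route as the paper: the paper isolates the bijection $\exp\colon \uf_n^\star(K)\to {\rm U}_n^\star(K)$ as a separate lemma (proved via the same finite $\log$ series you use) and then pushes Theorem~\ref{thm1.4} through $\exp$, noting at the outset that it suffices to treat ${\rm U}_n^\star(K)$. Your treatment is slightly more explicit in two places---the superdiagonal-preservation computation and the reduction from $-{\rm U}_n^\star(K)$ to ${\rm U}_n^\star(K)$---but the strategy is identical.
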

  
    For lower values of $n$, using  Belitski\u{i}'s algorithm canonical forms for  ${\rm Ad}_{{\rm T}_n(\C)}$-orbits of nilpotent  elements in $\uf_n(\C)$ has been obtained in the literature, e.g. \cite{CXLF}, \cite{TBH}, \cite{Ko}.  We have checked that each of the representatives are, in fact, strongly real over $K$. 
   All the examples of conjugacy class representatives  of unipotent elements in ${\U}(K)$ that we encountered are strongly real. We could not construct any non-real  unipotent  element in ${\U}(K)$.

The paper is organized as follows. In \secref{sec-notation} we fix the notation and recall some background. \thmref{1.1} is proved in \secref{sec-3}. In \secref{sec-4} we have proved \thmref{thm1.4} and \corref{thm-U}. 
Finally we write down some examples of real unipotent elements in ${\U}(K)$  for $n\leq 8$ in \secref{sec-5}. 

\section{Notation and Background}\label{sec-notation}
In this section we will fix some notation.
The Lie groups will be denoted by the capital letters,
  while the Lie algebra of a Lie group will be denoted
   by the corresponding lower case German letter.
Let $V$ be a right vector space of dimension $n$ over $K$,
 where $K$ is, as before, $\R$, $ \C $ or $ \H$. 
Let ${\g\l} (V)$ be the real algebra of {\it right $K$-linear maps}
 from $V$ onto $V$. After choosing a basis for $V$,  the elements
  of $\g\l (V)$ can be  represented by matrices over $K$. 
For a $K$-linear map $A \,\in\, \g\l(V)$ and an ordered $K$-basis $\BC$ of $V$,
the {\it matrix of $A$ with respect to $\BC$} is denoted by $[A]_{\BC}$.
Let  ${\rm GL(V)}$ denote the {\it group of invertible} right  $ K $-linear maps from ${\g\l} (V)$.

For two disjoint ordered sets
$(v_1,\, \ldots ,\,v_n)$ and $(w_1,\, \ldots ,\,w_m)$, we denote
the ordered set $(v_1,\, \ldots ,\,v_n,\, w_1,\, \ldots ,\, w_m)$  by
$$(v_1, \,\ldots ,\,v_n) \vee (w_1,\, \ldots ,\,w_m)\, .$$
A {\it partition} of a positive integer $n$ is an object of the form
$\d:=[ d_1^{t_{1}},\, \ldots ,\, d_s^{t_{s}} ]$, where $t_i,\, d_i \,\in\, \N$
 such that $\sum_{i=1}^{s} t_{i} d_i \,=\, n$, and $ d_1>\cdots>d_s>0$.
 Let $\PC(n)$ denote the {\it set of all partitions of $n$}. 

Now we include without proof a basic result.
\begin{lemma}\label{lem-eigen-value}
Let $A\in {\GL}_n(K)$ be a block matrix  of the form $A \,=\, (A_{ij})$, where $A_{ij}$'s are block matrices with $A_{ij}=0$ for $i>j$. Then the eigen values of $A$ are precisely union of the eigen values of the diagonal blocks $A_{ii}$ $(i=1,\ldots, m)$ of $A$.
\end{lemma}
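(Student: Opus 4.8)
The plan is to prove the familiar statement that the spectrum of a block upper-triangular matrix is the union of the spectra of its diagonal blocks, treating the commutative cases $K=\R,\C$ and the non-commutative case $K=\H$ separately. Over $\R$ and $\C$ the natural tool is the characteristic polynomial, whereas over $\H$ one must pass to a complex realization, since neither the determinant nor the expression ``$A-\lambda I$'' behaves well for right $\H$-linear maps.

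For $K=\R$ or $\C$ I would argue by induction on the number $m$ of diagonal blocks. Writing $A=\begin{pmatrix} A_{11} & B\\ 0 & C\end{pmatrix}$, where $C$ collects the blocks $A_{ij}$ with $i,j\ge 2$, the classical block-triangular determinant identity gives
\[
\det(\lambda I-A)=\det(\lambda I-A_{11})\,\det(\lambda I-C).
\]
Thus the characteristic polynomial of $A$ factors as the product of those of $A_{11}$ and $C$, so its roots, i.e.\ the eigenvalues of $A$, are exactly the eigenvalues of $A_{11}$ together with those of $C$; applying the inductive hypothesis to $C$ closes this case. Equivalently one can give a direct eigenvector argument: from $Av=\lambda v$ with $v=(x,y)^{\mathsf T}$ one reads off $Cy=\lambda y$, so either $y\ne 0$ and $\lambda\in\operatorname{spec}(C)$, or $y=0$ and $A_{11}x=\lambda x$ with $x\ne 0$; conversely an eigenvector of a diagonal block lifts to one of $A$ using that $A_{11}-\lambda I$ is invertible whenever $\lambda\notin\operatorname{spec}(A_{11})$.

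The main obstacle is the quaternionic case, and I would resolve it through the complex adjoint representation $\chi\colon M_n(\H)\to M_{2n}(\C)$ defined on $A=A_1+A_2\jb$, with $A_1,A_2$ complex matrices, by
\[
\chi(A)=\begin{pmatrix} A_1 & A_2\\ -\overline{A_2} & \overline{A_1}\end{pmatrix},
\]
which is a ring homomorphism and for which the eigenvalues of $A$ are, by definition, the conjugation-closed eigenvalues of $\chi(A)$. If $A$ is block upper-triangular, then $A_1$ and $A_2$ are block upper-triangular with the same block pattern, so each quaternionic block $A_{ij}$ contributes the complex block $\begin{pmatrix}(A_1)_{ij} & (A_2)_{ij}\\ -\overline{(A_2)_{ij}} & \overline{(A_1)_{ij}}\end{pmatrix}$, which vanishes for $i>j$. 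After reordering the $2n$ complex coordinates block-by-block, grouping for each $i$ the two complex copies belonging to the quaternionic block $i$, the matrix $\chi(A)$ becomes genuinely block upper-triangular with diagonal blocks $\chi(A_{ii})$.

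With this reduction in hand, applying the already-established complex case to $\chi(A)$ yields $\operatorname{spec}\chi(A)=\bigcup_i\operatorname{spec}\chi(A_{ii})$, which is precisely the assertion of the lemma for $K=\H$. I expect the only delicate point to be the bookkeeping of the coordinate reordering that makes $\chi(A)$ block upper-triangular; everything else rests on the standard multiplicativity of the characteristic polynomial under a triangular block decomposition.
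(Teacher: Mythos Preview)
Your argument is correct: the commutative case is handled by the standard factorization of the characteristic polynomial of a block upper-triangular matrix, and your treatment of $K=\H$ via the complex adjoint $\chi$ together with the permutation that interleaves the two complex copies block-by-block is exactly the right reduction, since the reordered $\chi(A)$ has $(i,j)$-block $\begin{pmatrix}(A_1)_{ij}&(A_2)_{ij}\\-\overline{(A_2)_{ij}}&\overline{(A_1)_{ij}}\end{pmatrix}$, which vanishes for $i>j$ and equals $\chi(A_{ii})$ on the diagonal.

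There is nothing to compare against: the paper explicitly records this lemma \emph{without proof} as a basic background fact. Your write-up supplies precisely the kind of short justification the authors omit, with the quaternionic case being the only point requiring genuine care, and you handle it cleanly.
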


\section{Proof of \thmref{1.1}}\label{sec-3}
Here we first prove a basic result which will be crucially used in the proof of \thmref{1.1}. Before that let us consider an example. 
\begin{example}
Let $ J\,:=\, \begin{pmatrix}
	0 & 1 &0\\
	  & 0 &1\\
	  &  & 0\\
	  &  &  & 0&1\\
	  & &   & &  0\\
	  &  &  &  &  &0
\end{pmatrix} $ be a nilpotent $ 6\times 6 $ Jordan matrix. The underlying vector space $ K^6 $ has a basis of the form : $ \{J^2e_3, \,Je_3,\, e_3,\, Je_5,\, e_5,\,e_6 \} $. We now consider the following ordered basis of $ K^6 $: 
\begin{align}\label{ordered-basis-B-exmp}
\BC:=\, \{J^2e_3, \,Je_5,\, e_6,\, Je_3,\, e_5,\,e_3 \}\,=\, (v_1,\dots, v_6)\,,{\rm say}.
\end{align}
If  $g\in {\GL}(K^n)$ be so that  $-J=g J g^{-1}$, then $ g$ is completely determined by it's action on $ e_i $ for $ i=3,5,6 $. Write $ g e_r\,=\, \sum_{i,\,l} c^l_{i\,r} J^le_i$ for $ r=3,5,6 $, where $ c^l_{i\,l} \in K$. By multiplying $J$ in both the sides suitably and using the fact that $ \BC $ is an basis, it follows that 
\begin{itemize}
	\item $~ g e_6\,=\,  c^2_{3\,6} J^2e_3\,+\,  c^1_{5\,6} Je_5\,+\, c^0_{6\,6}e_6$.
	\item $~ g e_5\,=\,  c^2_{3\,5} J^2e_3\,+\,  c^1_{5\,5} Je_5\,+\, c^0_{6\,5}e_6\,+\,  c^1_{3\,5} J^1 e_3\,+\, c^0_{5\,5} e_5 $.
	\item $~ g e_3\,=\,  c^2_{3\,3} J^2e_3\,+\,  c^1_{5\,3} Je_5\,+\, c^0_{6\,3}e_6\,+\,  c^1_{3\,3} J^1 e_3\,+\, c^0_{5\,3} e_5 \,+\, c^0_{3\,3} e_3$.
\end{itemize}
Therefore, 
$$g\big( \text{Span}_K\{v_1,\dots, v_i\} \big) \, \subseteq  \text{ Span}_K\{v_1,\dots, v_i\}\,, \quad {\rm for\, } \  i=1, \dots 6 \,,$$
where $ (v_1,\dots,v_6) $ as in \eqref{ordered-basis-B-exmp}. 
 In particular, the matrix $ [g]_\BC $ is of the form :
$$
[g]_\BC\,=\,
\begin{pmatrix}
c^0_{3\,3} &-c^1_{3\,5}  & c^2_{3\,6} & -c^1_{3\,3} & c^2_{3\,5}& c^2_{3\,3}\\
  	       &-c^0_{5\,5} & c^1_{5\,6}& -c^0_{5\,3}  & c^1_{5\,5} & c^1_{5\,3} \\
  	       &   &c^0_{6\,6} & 0 & c^0_{6\,5}  &  c^0_{6\,3} \\  
  	       &   && -c^0_{3\,3}  & c^1_{3\,5}  &  c^1_{3\,3} \\  
  	       & &  &   & c^0_{5\,5}  &  c^0_{5\,3} \\
  	       & &  &   &   &  c^0_{3\,3} \,.
\end{pmatrix}
$$
Note that $[g]_\BC$ can not be a unipotent matrix as both the non-zero scalars $ c^0_{3\,3}$ and $ -c^0_{3\,3}$ are eigen values of $ g $.
\end{example}

Let $J$ be a nilpotent  Jordan matrix and $-J=g J g^{-1}$ for some invertible matrix $g$. The next lemma generalizes the above example and 
gives a necessary condition about such conjugating element $g$. 

\begin{lemma}\label{lem-conjugating-elt-nonunipotent}
 Let $J$ be a nilpotent Jordan block matrix in ${\rm M}_n(K)$, and $g\in  {\GL}_n(K )$ so that $-J=g J g^{-1}$. Then there is an eigen value $\lambda$ of $g$ $(\lambda \in  \bar K)$ so that $-\lambda$ is also an eigen value of $g$. In particular, $g$ can not be a unipotent matrix. 
\end{lemma}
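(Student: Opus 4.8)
The plan is to recast the hypothesis $-J = gJg^{-1}$ as the anticommutation relation $gJ = -Jg$, and then read off the required symmetry of the spectrum of $g$ directly from the way $J$ intertwines the generalized eigenspaces of $g$, without computing $g$ explicitly. Throughout I would pass to the algebraic closure $\bar K$: for $K=\C$ there is nothing to do, while for $K=\R$ or $K=\H$ one complexifies (viewing $V$ as a complex vector space, so that right $K$-linear maps become $\C$-linear and the identity $gJ=-Jg$ is preserved). Here ``eigenvalue of $g$'' means an eigenvalue in $\bar K$ of this extended operator.

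The first step is the elementary intertwining identity. From $gJ=-Jg$ one gets $(g+\lambda I)J=-J(g-\lambda I)$ for every scalar $\lambda$, and hence by an immediate induction $(g+\lambda I)^k J=(-1)^k J(g-\lambda I)^k$ for all $k\ge 0$. Writing $V_\lambda:=\ker(g-\lambda I)^n$ for the generalized eigenspace of $g$ at $\lambda$, this identity shows at once that $J(V_\lambda)\subseteq V_{-\lambda}$: if $(g-\lambda I)^n v=0$ then $(g+\lambda I)^n(Jv)=(-1)^n J(g-\lambda I)^n v=0$, so $Jv\in V_{-\lambda}$.

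The spectral symmetry now follows from the nilpotency of $J$. Since $\bar K^n=\bigoplus_\lambda V_\lambda$ and the nilpotent block $J$ is nonzero, $J$ cannot vanish on every summand, so $J|_{V_\mu}\neq 0$ for some eigenvalue $\mu$ of $g$. Then $0\neq J(V_\mu)\subseteq V_{-\mu}$ forces $V_{-\mu}\neq 0$, that is, $-\mu$ is also an eigenvalue of $g$; and $\mu\neq 0$ because $g$ is invertible. This is exactly the asserted pair $\lambda=\mu$, $-\lambda=-\mu$. For the final clause, a unipotent matrix has $1$ as its only eigenvalue, so were $g$ unipotent we would need $\mu=1$ and $-\mu=1$ at once, which is impossible in characteristic zero; hence $g$ is not unipotent. (As an alternative that stays closer to the worked example, one could instead choose cyclic generators for $J$, show that in a suitably ordered basis $g$ becomes upper triangular with diagonal entries $\pm a$ for a single nonzero $a\in K$, and then invoke \lemref{lem-eigen-value}; the argument above avoids this bookkeeping.)

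The spectral argument itself is routine; the one point demanding genuine care is the passage to $\bar K$ when $K=\H$. There one must verify that complexification carries a unipotent quaternionic matrix to a complex matrix all of whose eigenvalues equal $1$ (so the concluding contradiction is legitimate), and that passing from right $K$-scalars to central complex scalars does not disturb the identity $(g+\lambda I)J=-J(g-\lambda I)$. I expect this quaternionic bookkeeping, rather than the spectral step, to be the main thing to pin down, and it is precisely what makes the proof uniform across $\R$, $\C$ and $\H$.
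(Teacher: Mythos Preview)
Your proof is correct and takes a genuinely different route from the paper's. The paper works on the $J$-side: it builds an explicit ordered basis $\BC$ adapted to the Jordan structure of $J$ (grouping the vectors $J^l e_i^{d_r}$ by ``depth''), shows that in this basis $g$ is block upper triangular, and then observes that the $(s{+}1)$-th diagonal block equals the negative of the first, so \lemref{lem-eigen-value} produces the pair $\lambda,-\lambda$. You work on the $g$-side instead: from $gJ=-Jg$ you derive $(g+\lambda I)^kJ=(-1)^kJ(g-\lambda I)^k$, so $J$ sends each generalized eigenspace $V_\mu$ of $g$ into $V_{-\mu}$, and $J\neq 0$ forces some $V_{-\mu}\neq 0$. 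Your argument is shorter, avoids all the basis bookkeeping, and in fact never uses that $J$ is nilpotent or in Jordan form---only that $J\neq 0$ (a hypothesis both proofs tacitly assume; this is harmless since the lemma is only invoked for the Jordan form of a nonzero nilpotent). What the paper's approach buys is an explicit block description of $[g]_\BC$, in keeping with the worked example preceding the lemma. Your handling of $K=\H$---restricting scalars along $\C\subset\H$ so that $g$ and $J$ become $\C$-linear on $\C^{2n}$ with unipotency preserved---is the right move and is essentially what the paper leaves implicit in writing ``$\lambda\in\bar K$''.
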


\begin{proof}
Recall that given a $ n\times n $ nilpotent Jordan block matrix $J$, one can associate a  partition $\d=[d_1^{t_1}, \ldots, d_s^{t_{s}}]$ of $ n $. 
 For the nilpotent matrix $J$, there exists column vectors $e_i^{d_r}\in K^n$ so that the underlying vector space $K^n$ has a basis of the form $\{J^le_i^{d_r}\, \mid \,  1\le r \le s, 1\le i \le t_r,  0\le l \le d_r-1\}$. Next we will define a suitable ordering for this basis. Let 
$$\BC^l(d_r) \,:=\, (J^l e^{d_r}_1,\, \ldots ,\,J^l e^{d_r}_{t_r})\quad  {\rm for } \quad 0\,\leq\, l \,\leq\, d_r-1, 1\le r\le s\,.$$ 
Define 
\begin{equation*}\label{old-ordered-basis}
	\BC(j) \,:=\, \BC^{d_1-j} (d_1) \vee    \cdots \vee  \BC^{d_s-j} (d_s)  ,\ \text{ and }\  \BC\,:=\, \BC(1) \vee \cdots \vee  \BC(d_1)\, .
\end{equation*}
Then $\BC$ is an ordered basis of  $ K^n $. Using the relation $(-J)^r=g J^r g^{-1}$, it follows that 
\begin{itemize}
	\item  $g\big({\rm Span}_K\{\BC^{d_1-1}(d_1)\vee \cdots\vee \BC^{d_r-1}(d_r) \}\big) \,\subseteq \, {\rm Span}_K\{\BC^{d_1-1}(d_1)\vee \cdots\vee \BC^{d_r-1}(d_r) \} $\\  for $ r=1,\ldots, s $.
	\item $g\big( \text{Span}_K\{\BC(1)\vee \cdots \vee \BC(t) \}\big) \,\subseteq \, \text{Span}_K\{\BC(1)\vee \cdots \vee \BC(t) \}$ for $t=1,\ldots , d_1$. 
\end{itemize}
In particular, 
$$
g\big(\text{Span}_K\{\BC^{d_1-1}(d_1)\} \big)\, \subseteq \, \text{Span}_K\{\BC^{d_1-1}(d_1)\}.
$$ 
The matrix  $[g]_\BC$ is a block upper triangular matrix with $ (d_1+\cdots +d_s) $-many diagonal blocks.   
 Write $ [g]_\BC = (g_{ij}) $, where $g_{ij}$ are  block matrices, and the first block  $g_{11}$ is given by $ [g|_{\text{Span}\{\BC^{d_1-1}(d_1)\}}]_{\BC^{d_1-1}(d_1)}$.
Note that the order of  the first $s$-many diagonal blocks $g_{jj}$  of $[g]_\BC$ are $ t_{j}\times t_{j} $,  for $j=1,\ldots , s$. 
The matrix  $[g]_\BC$ is of the following form:
$$
[g]_\BC\,=\,
\begin{pmatrix}
 g_{1\,1} &  \cdots &  \ \  \cdots  &g_{1\,n}\\
 &\ddots &   &\vdots \\
 &&   g_{r\,r} \  \cdots & g_{r\,n}\\     
 & &\qquad  \ddots   &\vdots \\
  & && g_{n\,n}
       \end{pmatrix}\,.
$$  
From the definition of the basis $ \BC $, it follows   that the $(s+1)^{\rm th}$  diagonal block matrix  $g_{s+1\,\,s+1}$   is  $-g_{11}$.
 Thus if $\lambda\in \bar K$ is an eigen value of $g_{1\,1}$, then $-\lambda$ is  an eigen value of $g_{s+1\,\,s+1}$. The proof  now follows using  Lemma \ref{lem-eigen-value}.
\end{proof}

{\bf Proof of \thmref{1.1}.~} 
Let $0\neq X\in \mathfrak{u}_n (K)$ be a Ad$ _{{\rm U}_n(K)} $-real element. Then $-X= \alpha X \alpha^{-1}$ for some $\alpha \in {\rm U}_n(K)$. Since $X$ is a nilpotent matrix, there is a $\beta\in {\GL}_n(K)$ so that  $J_X = \beta X \beta^{-1}$, where $J_X$ is the Jordan Canonical form of $X$. Hence $-J_X = (\beta \alpha \beta^{-1}) J_X  (\beta \alpha^{-1} \beta^{-1})$.  
This contradicts  \lemref{lem-conjugating-elt-nonunipotent}, as all the eigen values of  $\beta \alpha \beta^{-1}\in {\GL}_n(K)$ are $1$.  This completes the proof. \qed

\section{Proof of Theorem \ref{thm1.4}}\label{sec-4}
In this section we will consider  Ad-action of $ {\U}(K) $ on $ \mathfrak{u}_n (K) $. 
Recall that $ \uf_n^\star(K) $ be the subset of $\uf_n(K)$ consisting of the elements with non-zero entries at $ (i, i+1)^{\rm th}$ place for $ i=1, \dots , n-1 $  (  entries  above the main diagonals are non-zero).

{\bf Proof of \thmref{thm1.4}.} 
	We will use induction on $ n $,  order of  the matrix.  For $n=2,3 $ every element $ \uf_n(K) $ is ${\rm Ad}_{\U( K)} $-real, see Example \ref{exmp-2}.
	Now we will assume that the statement is true for the matrix in $ \uf_{n-1}^\star(K) $. 
	Let $ X\,=\, \begin{pmatrix}
		X_{n-1}& \xb\\
		&0
	\end{pmatrix} \in \mathfrak{u}_n^\star(K)$, where $ X_{n-1}\in\uf_{n-1}^\star(K)$, and $ \xb \in K^{n-1} $.
	By induction hypothesis, 
	 $g_{n-1}X_{n-1}g_{n-1}^{-1} \,=\, -X_{n-1}$ for some 
	 $g_{n-1}\in {\rm U}^{\pm1}_{n-1}(K)$. 
	
	Let $ g\,:=\, \begin{pmatrix}
		g_{n-1}& \ab\\
		&\epsilon
	\end{pmatrix}\in {\U}(K)$, where $ \ab\in K^{n-1}$ and $\epsilon\in \{ 1, -1\}$. 
	We want to choose $ \ab$ and $\epsilon $ so that  
	$$ g Xg^{-1}=-X .$$
	The relation $ gXg^{-1}=-X $ is equivalent to the the following two equations:
	\begin{align}
		g_{n-1}X_{n-1} +X_{n-1}g_{n-1}\,&=\,0\, \label{eq-1}\\
	  X_{n-1} \ab \,&\, =\,  -( g_{n-1}+\epsilon\, {\rm Id})\xb\,. \label{eq-2}
	\end{align}
Note that \eqref{eq-1} is already true.
 Thus we want to choose $ \ab$ and  $\epsilon$ so that \eqref{eq-2} hold.
 Write $ X_{n-1}=(x_{ij}) $ and $ \ab=(a_1, \dots, a_{n-1})^T $. 
 Set $ \epsilon = $ negative  of $( n-1, n-1)^{\rm th}$ entry of $ g_{n-1} $.
 Then \eqref{eq-2} is equivalent to the following system of equations.
\begin{equation}\label{system-eqn}
\sum_{\substack{1\le i<j\le n-1\\ i\leq n-2}} x_{ij}a_{j} \,=\, b_i\,,
\end{equation}
where $ (b_1,\dots, b_{n-1})^T:= -( g_{n-1}+\epsilon\, {\rm Id})\xb$.
Since $ x_{i\ i+1} \neq 0$ for $ i=1,\dots, n-2 $, the above system of equations \eqref{system-eqn} has a solution.
 Hence, $X$ is Ad$ _{\U(K)} $-real.
\qed

 The next lemma is basic result. We include a proof for the sake of completeness.
\begin{lemma}\label{exp}
	The exponential map $ \exp $ maps $  \uf_n^\star(K)$ bijectively onto  $ {\rm U}_n^\star(K) $.
\end{lemma}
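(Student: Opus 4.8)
The plan is to show that $\exp$ restricts to a bijection between $\uf_n^\star(K)$ and ${\rm U}_n^\star(K)$ by exploiting the fact that $\exp$ is already known to be a diffeomorphism from $\uf_n(K)$ onto ${\rm U}_n(K)$ (the statement used earlier in the paper). Since a bijection restricts to a bijection on any subset matched with its image, the whole content of the lemma is the \emph{identification of the image}: I must prove that $\exp\big(\uf_n^\star(K)\big) = {\rm U}_n^\star(K)$. Thus the real work is to track what happens to the superdiagonal entries under the exponential, since both $\uf_n^\star$ and ${\rm U}_n^\star$ are distinguished inside $\uf_n$ and ${\rm U}_n$ precisely by the non-vanishing of the entries at positions $(i,i+1)$.

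First I would write $X = (x_{ij}) \in \uf_n(K)$, a strictly upper-triangular matrix, and compute the superdiagonal entries of $\exp X = \sum_{k\ge 0} X^k/k!$. The key observation is that $X^k$ is supported on the $k$-th superdiagonal and above; consequently, for the entry at position $(i,i+1)$, the only terms of $X^k$ contributing lie strictly above position $(i,i+1)$ for $k \ge 2$, so the $(i,i+1)$ entry of $X^k$ vanishes for all $k \ge 2$. Therefore the $(i,i+1)$ entry of $\exp X$ equals the $(i,i+1)$ entry of $\mathrm{Id} + X$, which is exactly $x_{i,i+1}$ (the diagonal entry of $\exp X$ being $1$). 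Hence the superdiagonal of $\exp X$ coincides with the superdiagonal of $X$. This immediately gives $\exp\big(\uf_n^\star(K)\big) \subseteq {\rm U}_n^\star(K)$, because $x_{i,i+1}\neq 0$ for all $i$ translates directly into the $(i,i+1)$ entries of $\exp X$ being non-zero.

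For the reverse containment I would run the same computation on the inverse map, or equivalently argue as follows: let $u = (u_{ij}) \in {\rm U}_n^\star(K)$, and set $X := \log u = \sum_{k\ge 1} (-1)^{k+1}(u-\mathrm{Id})^k/k \in \uf_n(K)$, which is well-defined and nilpotent since $u-\mathrm{Id}$ is strictly upper-triangular. The same support argument applied to the powers $(u-\mathrm{Id})^k$ shows that only the $k=1$ term contributes to the $(i,i+1)$ entry of $X$, so the $(i,i+1)$ entry of $X=\log u$ equals the $(i,i+1)$ entry of $u-\mathrm{Id}$, namely $u_{i,i+1}$. Since $u \in {\rm U}_n^\star(K)$ forces $u_{i,i+1}\neq 0$, we conclude $X \in \uf_n^\star(K)$ with $\exp X = u$, giving ${\rm U}_n^\star(K)\subseteq \exp\big(\uf_n^\star(K)\big)$. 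Combining the two containments yields $\exp\big(\uf_n^\star(K)\big) = {\rm U}_n^\star(K)$, and bijectivity is inherited from the global diffeomorphism.

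The main obstacle, modest as it is, is the bookkeeping over $K = \H$: because the quaternions are non-commutative one must be careful that the power-series manipulations and the superdiagonal support argument do not secretly rely on commutativity. Fortunately the only facts used are (i) matrix multiplication distributes over addition and (ii) a product of strictly upper-triangular matrices has its support pushed strictly further above the diagonal — both of which hold verbatim over any associative (possibly non-commutative) ring, so the argument goes through uniformly for $K=\R,\C,\H$ without modification. I would therefore emphasize that the superdiagonal entries are \emph{unchanged} by $\exp$ and $\log$, and that this single fact drives the whole lemma.
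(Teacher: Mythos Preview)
Your proposal is correct and follows essentially the same route as the paper: injectivity is inherited from the global bijection $\exp\colon \uf_n(K)\to {\rm U}_n(K)$, and surjectivity is obtained via the finite logarithm series $X=\sum_{k\ge 1}(-1)^{k+1}(u-{\rm Id})^k/k$. Your version is in fact more complete, since you make explicit the key reason the paper leaves implicit, namely that the $(i,i+1)$ entries of $\exp X$ (resp.\ $\log u$) coincide with those of $X$ (resp.\ $u-{\rm Id}$) because higher powers of a strictly upper-triangular matrix vanish on the first superdiagonal.
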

\begin{proof}
The injectivity follows from the face that the exponential map 
$$ \exp\colon \uf_n(K)\,\lto \, {\rm U}_n(K) $$
is injective. For surjectivity, let $ u\in {\rm U}_n^\star(K) $. Then $ ({\rm Id} -u)^{n} =0$. Define
$$ X:= -({\rm Id} -u) - \frac{({\rm Id} -u)^2}{2} - \dots - \frac{({\rm Id} -u)^{n-1}}{n-1}\, .$$
Then $ X\in  \uf_n^\star(K)  $ and $ \exp (X) =u$.
\end{proof}

{\bf Proof of \corref{thm-U}.}
Enough to prove for any $ g\in {\rm U}_n^\star(K)$. Using \lemref{exp},  $ g=\exp X $ for some $ X\in \uf_n^\star(K)$. In view of  \thmref{thm1.4}, $ -X=hXh^{-1}$ for some $ h\in \U(K) $. Thus $ g^{-1}=hgh^{-1} $. This completes the proof.  \qed

\section{Examples of Strongly real Unipotent Elements in ${\U}(K)$ for lower $ n $ }\label{sec-5}
 
In \cite{CXLF},   canonical forms of upper triangular nilpotent matrices in  $\uf_n(K) $ under ${\rm Ad}_{{\rm T}_n(\C)}$-action were given for $ n\le 8 $.
In this section, we will verify that every such nilpotent matrix in $\uf_n(K) $ is strongly $ {\rm Ad }_{\U(K)} $-real. 
  Using the exponential map it follows that the corresponding unipotent element in $ \U(K) $ is strongly real. 

\begin{example}\label{exmp-2}
	$ \mathbf{(1).}$	For $n=2$, any   element is of the form  $X=\begin{pmatrix}
		0&x\\
		&0\end{pmatrix}$. 
	Let $g=\begin{pmatrix}
		1\\&-1
	\end{pmatrix}\in {\rm U^{\pm1}_2}(K)$. 
	Then $gXg^{-1}=-X$ and $g^{2}= {\rm Id}$.  Thus $ X $ is strongly Ad-real. 

$ \mathbf{(2).}$ For $n=3$, let $X= \begin{pmatrix}
                        0&a&b\\
                        &0&c\\
                        &&0
                       \end{pmatrix}\in \mathfrak{u}_3(K)
$ be any arbitrary element. 
Now we consider two cases. If either $a$ or $b$ non-zero. Then set $g := \begin{pmatrix}
                        1&p& -pr/2   \\
                        &-1&r\\
                        &&1
                       \end{pmatrix}\in  {\rm U^{\pm1}_3}(K)$ where   $p,r$ satisfy $cp+ar+2b=0$. 
 Finally if  $a=0=b$, then take $g:={\rm diag}(1,1,-1 )$. Thus  every nilpotent element is strongly Ad$_{ {\rm U^{\pm1}_3}(K)}$-real. \qed
 \end{example}

The Ad(${\rm T}_n(K)$)-orbits of any element in $\mathfrak{u}_n (K)$ are classified in \cite[Theorems 2.1 -- 2.4]{CXLF} for $n\leq 6$, and partially for $n=7,8$; see also \cite[Theorem 2]{Ko} for $n=5$.
We have verified that each of the nilpotent matrix in \cite{CXLF} is strongly Ad-real, and  list down an involution $ \sigma $ for each of such nilpotent matrix $ X $ in the same order as given in \cite{CXLF} so that $\sigma X \sigma^{-1}=-X$. 

{\it Involutions for the elements in \cite[Theorem 2.1]{CXLF}:-} 
\begin{align*}
& {\rm diag }(1,-1),\, {\rm diag }(-1,1,-1),\, {\rm diag }(1, -1,1,-1) ,\, {\rm diag }(1, -1,-1,1)     ,\, {\rm diag }(1, -1, 1,-1,1),\\
 &    {\rm diag }(1, -1, -1,1,1),\, {\rm diag } (1, -1, -1,1,-1)     ,\, {\rm diag }(1, -1, 1,1,-1)   ,\, {\rm diag }(1, 1, -1,-1,1)
\end{align*}

{\it Involutions for the elements in \cite[Theorem 2.2]{CXLF}:-} 
\begin{align*}
&{\rm diag }(1, -1, 1,-1,1,-1),\,  {\rm diag } (1, -1, -1,1,-1,1),\,  {\rm diag }(1, -1, 1,-1,1,-1),\\
&{\rm diag }(1, 1, -1,-1,1,-1),\,  {\rm diag }(1, -1, 1,1,-1,1),\, \ \  {\rm diag } (-1, 1, 1,-1,-1,1) ,\\
&{\rm diag }(-1, -1, 1,1,1,-1),\,  {\rm diag } (-1, 1, 1,-1,-1,1),\,  {\rm diag } (-1, 1, 1,-1,-1,1) ,\\
&{\rm diag }(1, -1,- 1,1,1,-1),\,  {\rm diag } (-1, 1,-1,1,1,-1),\,   {\rm diag } (-1, 1, -1,1,1,-1),\\
&{\rm diag }(-1, 1, 1,1,-1,-1),\,  {\rm diag } (1, 1,-1,-1,1,1),\, \ \  {\rm diag } (1, -1, 1,1,-1,-1) ,\\
&{\rm diag }(-1, 1, 1,-1,1,-1),\,  {\rm diag } (-1, 1, 1,-1,-1,1),\,  {\rm diag } (-1, 1, 1,-1,-1,1)\,.
\end{align*}

{\it Involutions for the elements in \cite[Theorem 2.3]{CXLF}:-} 
\begin{align*}
&{\rm diag }(1, 1, -1,-1,-1,1,1),\,  {\rm diag } (-1, 1, 1,-1,-1,1,1),\,  {\rm diag } (1, -1, -1,1,1,-1,1),\\
&{\rm diag }(1, -1, -1,1,1,1,-1),\,  {\rm diag } (-1, 1, 1,1,-1,-1,1),\,  {\rm diag } (1, -1, 1,1,-1,-1,1),\\
&\,{\rm diag }(1, 1, -1,-1,1,1,-1),\,  
{\scriptsize  { 
\begin{pmatrix}
                                    1&&-2\lambda &&&&\\
                                    &1\\
                                    &&-1\\
                                    &&&1\\
                                    &&&&-1\\
                                    &&&&&-1\\
                                    &&&&&&1\\
                                   \end{pmatrix}\, .
                             }   }
                             \end{align*}
The nilpotent  $8\times 8$ matrix involving two parameters $\mu$ and $\lambda$ given in \cite[p. 143]{CXLF} is strongly Ad-real. The corresponding involution is   diag$(1,-1,-1,1,1,-1,-1,1 )$.

\medskip 

The above examples shows that for lower values of $ n $  almost all nilpotent matrices in $\mathfrak{u}_n( K)$ are strongly Ad$_{ \U(K)}$-real. However, we did not found any non-real unipotent element in the group $ \U(K) $.   Generalizing the method in Example 5.2, one can start with a nilpotent matrix for lower values of  $n$ and see this by hand computation. However, with large $n$, the complexity grows and there are difficulties in generalizing these examples.


\begin{thebibliography}{AAAA} 
\bibitem[Ch]{C} P. Chatterjee,   On abstract homomorphism of algebraic groups,  {\it preprint}, {https://arxiv.org/abs/1511.06180}.

\bibitem[CXLF]{CXLF} Y. Chen, Y. Xu, H. Li, W. Fu,
Belitski\u{i}'s canonical forms of upper triangular nilpotent matrices under upper triangular similarity, {\it Linear Algebra Appl.} 506 (2016), 139--153.

\bibitem[DM]{DM} D. Z. Djokovic, J. Malzan, Orbits of nilpotent matrices, {\it Linear Algebra Appl}. 32 (1980) 157--158. 

\bibitem[FS]{FS} A.  G. O’Farrell,   I.  Short, {\it Reversibility in Dynamics and Group Theory}, London Mathematical Society Lecture Note Series, vol.416, Cambridge University Press, Cambridge, 2015.

\bibitem[GM]{GM} K. Gongopadhyay, C. Maity,  Reality of unipotent elements in simple Lie groups,  {\it  preprint}, {https://arxiv.org/abs/2101.02732}.

\bibitem[Ko]{Ko} D.  Kobal, Belitski\u{i}'s canonical form for $5\times 5$ upper triangular matrices under upper triangular similarity, {\it  Linear Algebra Appl.}  403 (2005), 178--182.


 
\bibitem[Sl1]{S}  R. Slowik,  Real elements in ${\rm T}_n(K)$, {\it  Linear Multilinear Algebra}, 61 (2013), no. 5, 667--677.
 

\bibitem[TBH]{TBH} M-C. Tsai, M.  Bogale,  H. Huang,
On triangular similarity of nilpotent triangular matrices, {\it Linear Algebra Appl.}, 596 (2020), 1--35.
  
\end{thebibliography}
\end{document}